\newtheorem{proposition}{Proposition}[section]
\newtheorem{lemma}[proposition]{Lemma}
\newtheorem{theorem}[proposition]{Theorem}
\DeclareMathOperator{\Sym}{Sym}
\DeclareMathOperator{\Alt}{Alt}
\DeclareMathOperator{\Irr}{Irr}
\begin{document}

\title{Alternating groups as products of four conjugacy classes}
\date{}

\author[Martino Garonzi]{Martino Garonzi}
\address[Martino Garonzi]{Departamento de Matem\'atica, Universidade de Bras\'ilia, Campus Universit\'ario Darcy Ribeiro, Bras\'ilia-DF, 70910-900, Brazil}
\email{mgaronzi@gmail.com}

\author{Attila Mar\'oti}
\address[Attila Mar\'oti]{Alfr\'ed R\'enyi Institute of Mathematics, Re\'altanoda utca 13-15, H-1053, Budapest, Hungary}
\email{maroti.attila@renyi.hu}

\thanks{The first author acknowledges the support of Funda\c{c}\~{a}o de Apoio \`a Pesquisa do Distrito Federal (FAPDF) - demanda espont\^{a}nea 03/2016, and of Conselho Nacional de Desenvolvimento Cient\'ifico e Tecnol\'ogico (CNPq) - Grant numbers 302134/2018-2, 422202/2018-5. The work of the second author on the project leading to this application has
received funding from the European Research Council (ERC) under the
European Union's Horizon 2020 research and innovation programme
(grant agreement No. 741420). He was also
supported by the National Research, Development and Innovation Office
(NKFIH) Grant No.~K115799 and Grant No.~K132951.}

\subjclass[2010]{20E45, 20B30}
\keywords{Alternating group, conjugacy class, character sum}

\begin{abstract}
Let $G$ be the alternating group $\Alt(n)$ on $n$ letters. We prove that for any $\varepsilon > 0$ there exists $N = N(\varepsilon) \in \mathbb{N}$ such that whenever $n \geq N$ and $A$, $B$, $C$, $D$ are normal subsets of $G$ each of size at least $|G|^{1/2+\varepsilon}$, then $ABCD = G$.
\end{abstract}

\maketitle

\section{Introduction}

Given two subsets $A,B$ of a group $G$ we denote by $AB$ the set of products $ab$ where $a \in A$, $b \in B$. A subset $A$ of $G$ is called normal if $gAg^{-1} = A$ for all $g \in G$. Clearly, a subset of $G$ is normal if and only if it is a union of conjugacy classes. Observe that if $A$ and $B$ are normal sets, then $AB=BA$. 

The covering number of a nontrivial conjugacy class $C$ of a finite nonabelian simple group $G$ is the minimum positive integer $k$ such that $C^k=G$. Brenner \cite{Brenner} showed that almost all conjugacy classes of the alternating group $\Alt(n)$ have covering number at most $4$, and observed that there are classes with covering number $4$, for example the class of fixed-point-free involutions (see the penultimate paragraph of the Introduction).

Larsen and Shalev \cite[Theorem 1.13]{LarsenShalev} proved that an element $g$ of the symmetric group $\Sym(n)$ satisfies $(g^{\Sym(n)})^2=\Alt(n)$ with probability tending to $1$ as $n \to \infty$. (Here and throughout the paper $x^G$ denotes the conjugacy class of an element $x$ in a finite group $G$.) For a related result see \cite[Theorem 1.20]{LarsenShalev}. Larsen and Shalev also proved \cite[Theorem 1.14]{LarsenShalev} that if $n$ is sufficiently large, then any element $g \in \Sym(n)$ with at most $n/5$ fixed points satisfies $(g^{\Sym(n)})^4 = \Alt(n)$.

In this paper we take a different approach, considering the product of possibly distinct normal sets. Larsen, Shalev and Tiep \cite{LST} proved that if $\varepsilon > 0$ is a constant, then for sufficiently large $n$ the following holds: whenever $A,B$ are two normal subsets of $G=\Alt(n)$ of size larger than $\varepsilon |G|$, then $AB$ contains every nontrivial element of $G$, and they proved that the same holds for simple groups of Lie type of bounded rank. In this context, a subset is large if it has size at least the size of $G$ multiplied by a universal positive constant (less than $1$). Observe that using their result it is easy to see that, if $A,B,C$ are large normal subsets of $G$, then $ABC=G$. We are interested in studying largeness related to the size of $G$ raised to a constant $\gamma$.

Let $G=\Alt(n)$ be the alternating group on $n$ letters. In \cite[Theorem 1.3]{MP} it is proved that there exists $\gamma$ with $0 < \gamma < 1$ such that whenever $8$ normal subsets of $G$ have size at least $|G|^{\gamma}$, their product is $G$. It was asked if the same holds with less than $8$ normal sets. In this paper we prove that the result holds for $4$ normal sets and that if $\gamma$ is close to $1/2$, then this is best possible.

\begin{theorem} \label{main}
For any $\varepsilon > 0$ there exists $N \in \mathbb{N}$ such that whenever $n > N$ and $A,B,C,D$ are normal subsets of $G=\Alt(n)$ such that all of the numbers $|A||B|$, $|A||C|$, $|A||D|$, $|B||C|$, $|B||D|$, $|C||D|$ are at least $|G|^{1+\varepsilon}$, then $ABCD=G$.
\end{theorem}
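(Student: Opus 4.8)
The plan is to fix $g\in G$ and show that the number
$$N(g)=\#\{(a,b,c,d)\in A\times B\times C\times D : abcd=g\}$$
is positive uniformly in $g$, so that in particular $g\in ABCD$. I would use the representation-counting formula from character theory: writing $\chi(X)=\sum_{x\in X}\chi(x)$ for a normal set $X$, one has $N(g)=\frac{1}{|G|}\sum_{\chi\in\Irr(G)}\chi(A)\chi(B)\chi(C)\chi(D)\overline{\chi(g)}/\chi(1)^{3}$. The trivial character contributes the main term $|A||B||C||D|/|G|$, and since $|\overline{\chi(g)}|\le\chi(1)$ it suffices to prove the domination inequality $\Sigma<|A||B||C||D|$, where $\Sigma=\sum_{1\ne\chi\in\Irr(G)}|\chi(A)||\chi(B)||\chi(C)||\chi(D)|/\chi(1)^{2}$. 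Treating every $g$ at once this way also settles the identity element, so no separate argument is needed for $1\in ABCD$.

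The two inputs I would rely on are: (i) the $L^2$ identity $\sum_{\chi}|\chi(X)|^2=|X||G|$, valid for every normal set $X$ (Parseval applied to the class function $1_X$), which gives $\sum_{1\ne\chi}|\chi(X)|^2=|X|(|G|-|X|)$; and (ii) sharp bounds on the individual values of the form $|\chi(x)|\le\chi(1)^{1-\delta}$ for elements of support at least $\varepsilon' n$, together with the decay of the Witten zeta function $\sum_{1\ne\chi}\chi(1)^{-s}\to0$ as $n\to\infty$ for every fixed $s>0$ (there are no nontrivial linear characters of $\Alt(n)$). Before estimating I would record the consequence of the hypothesis: ordering $|A|\ge|B|\ge|C|\ge|D|$, the binding constraint $|C||D|\ge|G|^{1+\varepsilon}$ forces $|A|,|B|,|C|\ge|G|^{(1+\varepsilon)/2}$, so at most one of the four sets is genuinely small, and in all cases $\sqrt{|A||B||C||D|}=\sqrt{|A||B|}\cdot\sqrt{|C||D|}\ge|G|^{1+\varepsilon}$.

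Then I would split $\Irr(G)\setminus\{1\}$ by degree into a \emph{high} part $\mathcal H$ (with $\chi(1)$ above a threshold $|G|^{\beta}$) and a bounded \emph{low} part $\mathcal L$. For $\mathcal H$ I would combine the $L^2$ identity with the large denominator and the Witten-zeta decay: using $|\chi(X)|\le(|X||G|)^{1/2}$ on the factors and the smallness of $\sum_{\chi\in\mathcal H}\chi(1)^{-s}$, the contribution $\Sigma_{\mathcal H}$ becomes negligible next to $|A||B||C||D|$ once $n$ is large. The delicate part is $\Sigma_{\mathcal L}$. Here I would bound each $|\chi(X)|$ directly, exploiting cancellation in $\chi(X)=\sum_{x\in X}\chi(x)$: for the large sets one uses that $\chi(G)=0$ forces $|\chi(X)|$ to be small when $|G\setminus X|$ is small, while for the one possibly small set $D$ one uses $|\chi(D)|\le|D|\cdot\max_{d}|\chi(d)|$ with the support-based value bound. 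The symmetric hypothesis is exactly what couples a small $|D|$ to nearly full $|A|,|B|,|C|$, keeping the product $|\chi(A)||\chi(B)||\chi(C)||\chi(D)|$ under control for each of the finitely many low characters.

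The main obstacle I anticipate is precisely this low-degree regime. A normal set may be concentrated on elements of small support (few-cycle permutations), for which the low-degree characters take values close to $\chi(1)$, and the blunt bound $|\chi(X)|\le(|X||G|)^{1/2}$ is too lossy to beat $|G|^2/\chi(1)^2$ on characters of degree as small as $n-1$; indeed, for a class like that of $3$-cycles the standard character alone would swamp the main term, and it is only the hypothesis $|A||B|\ge|G|^{1+\varepsilon}$ that excludes such small concentrated sets. Overcoming this requires the sharp Larsen–Shalev value bounds to capture the cancellation within each normal set, together with a careful accounting—driven by the six pairwise inequalities $|A||B|,\dots,|C||D|\ge|G|^{1+\varepsilon}$—showing that the contribution of every low character is dominated by the main term, whence $\Sigma=\Sigma_{\mathcal L}+\Sigma_{\mathcal H}<|A||B||C||D|$ and $ABCD=G$.
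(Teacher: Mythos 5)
Your counting formula and the Parseval identity are correct, but the reduction you make at the outset --- bounding $|\overline{\chi(g)}|\le\chi(1)$ uniformly in $g$ and reducing to the domination inequality $\Sigma<|A||B||C||D|$ --- is fatal, because that inequality is \emph{false} under the hypotheses of the theorem. Take $f=\lfloor(1-2\varepsilon)n/2\rfloor$ (adjusted so that $n-f$ is odd) and let $A=B=C=D$ be the conjugacy class in $G=\Alt(n)$ of permutations with $f$ fixed points and one $(n-f)$-cycle. Then $|A|=n!/(f!\,(n-f))=|G|^{(1+2\varepsilon)/2+o(1)}$, so all six pairwise products exceed $|G|^{1+\varepsilon}$ for $n$ large and the hypotheses hold; but for the irreducible character $\psi$ of $G$ of degree $n-1$ (the restriction of $\chi_{(n-1,1)}$), every $x\in A$ satisfies $\psi(x)=f-1>0$, so $|\psi(A)|=|A|(f-1)$ with no cancellation, and already this single term gives $|\psi(A)|^4/\psi(1)^2\approx\left((1-2\varepsilon)^4/16\right)n^2\,|A|^4\gg|A||B||C||D|$. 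Hence $\Sigma>|A||B||C||D|$, and no ``careful accounting'' of low-degree characters can close the argument. Your self-diagnosis is also off: the hypothesis $|A||B|\ge|G|^{1+\varepsilon}$ excludes tiny sets such as the class of $3$-cycles, but it does \emph{not} exclude fixed-point-heavy classes like the one above, for which the Larsen--Shalev bounds are vacuous (an element with $cn$ fixed points only admits $|\chi(x)|\le\chi(1)^{1-o(1)}$, as the standard character shows). The loss is structural, not technical: discarding $\chi(g)$ erases the only mechanism that makes the theorem true. For the sets above, the standard-character term in $N(g)$ is huge but \emph{positive} when $g$ has many fixed points (it helps rather than hurts), while it is $O(1)$ when $g$ is an $l$-cycle; no estimate uniform in $g$ that ignores signs can succeed. (A secondary inconsistency: for your $L^2$/Witten-zeta bound on $\mathcal{H}$ to beat the main term you need $\sum_{\chi\in\mathcal{H}}\chi(1)^{-2}<|G|^{\varepsilon-1}$, which forces the degree threshold up to roughly $|G|^{(1-\varepsilon)/2}$; then $\mathcal{L}$ contains unboundedly many characters rather than a bounded set, and the middle range of degrees is covered by neither of your two arguments.)

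The paper's proof is organized precisely to avoid estimating $N(g)$ for general $g$. It first replaces each normal set by a single large conjugacy class inside it (\cite[Lemma 4.2]{MP}), then proves $AB\supseteq O_l$ and $CD\supseteq O_l$, where $O_l$ is the set of $l$-cycles with $l=n$ or $n-1$, and concludes via Rodgers' identity $O_l^2=\Alt(n)$ (Lemma \ref{on2an}). The containment $AB\supseteq O_l$ is obtained combinatorially from the Dvir--Rodgers criterion (Theorem \ref{dr}), with Lemma \ref{deltac} converting class size into the cycle-count parameter $\delta$; character sums appear only when the target $g$ is an $l$-cycle (Lemmas \ref{excon} and \ref{onsa}), and there they are tractable for exactly the reason your approach cannot exploit: $\chi_\lambda(g)=0$ unless $\lambda$ contains an $l$-hook, so only about $n$ characters survive, and their values are controlled by Theorem \ref{ls} and Lemma \ref{degalt}. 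In short, the $g$-dependence of $\chi(g)$ that you bound away is the engine of the paper's argument; a direct character-sum proof would have to either restrict the target to $l$-cycles and bootstrap as the paper does, or genuinely track signs of character values, which your proposal does not attempt.
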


In particular Theorem \ref{main} applies to the case in which the four normal subsets have size not less than $|G|^{1/2+\varepsilon}$, improving \cite[Theorem 1.3]{MP} in the case of alternating groups. The question of whether there exists $\gamma$ with $\gamma < 1$ such that, whenever $A,B,C$ are normal subsets of $G=\Alt(n)$ with $|A|,|B|,|C| \geq |G|^{\gamma}$, then $ABC=G$ is still open, however Theorem \ref{main} goes in this direction, since one of the four classes is allowed to be very small. If we interpret largeness in the sense of Larsen, Shalev and Tiep, then the product of any three large normal sets equals $G$, as seen above.

Theorem \ref{main} is best possible in the following sense. Let $n$ be a multiple of $4$, let $G = \Alt(n)$ and let $x$ be a fixed-point-free involution in $G$. Let $C$ be the conjugacy class of $x$ in $G$. Then, using the fact that $(n/3)^n \leq n! \leq (n/2)^n$ whenever $n \geq 6$, which can be easily deduced from Stirling's inequalities, one may see that if $n$ is sufficiently large, then $|C| = (n-1)!! = (n-1)(n-3) \cdots 3 \cdot 1 \geq (2/3)^n |G|^{1/2}$. This implies that $|C|$ is arbitrarily close to $|G|^{1/2}$ in the sense that for every $\varepsilon > 0$ there exists $N \in \mathbb{N}$ such that $|C| \geq |G|^{1/2-\varepsilon}$ for every $n \geq N$. However, as shown by Brenner in \cite[Lemma 3.06]{Brenner}, $C^3 \neq G$ and $C^4=G$. See also \cite{V}.

The paper is organized as follows. In Section \ref{delta} we introduce a useful tool by Dvir and Rodgers used to decide whether a product of two conjugacy classes of $\Sym(n)$ contains the $n$-cycles, for $n$ odd, and the $(n-1)$-cycles, for $n$ even, based on the number of disjoint cycles of an element in each class (Theorem \ref{dr}). We then relate this to our context (Lemma \ref{deltac}). In Section \ref{chsum} we recall known facts about character sums and how to apply them to products of conjugacy classes. In Section \ref{symalt} we recall how to compute character values for the symmetric and alternating groups. In Section \ref{proofmain} we finish the proof of Theorem \ref{main}.

\section{The $\delta$ of a conjugacy class} \label{delta}

Given a conjugacy class $C$ of $\Alt(n)$ or $\Sym(n)$, let $\delta(C) := n-t$, where $t$ is the number of disjoint cycles of an element of $C$. Dvir \cite{Dvir} proved a fact reformulated by Rodgers \cite[Theorem 1.1(iii)]{R}, which we will state here a particular case of. Denote the set of all $n$-cycles in $\Sym(n)$ by $O_n$, and denote the set of all $(n-1)$-cycles in $\Sym(n)$ by $O_{n-1}$. Observe that if $C$ is a conjugacy class of $\Sym(n)$ contained in $\Alt(n)$ then $\delta(C)$ is even.

\begin{theorem} \label{dr}
Let $A$, $B$ be two conjugacy classes of $\Sym(n)$ contained in $\Alt(n)$.
\begin{enumerate}
\item If $n$ is odd and $\delta(A)+\delta(B)>n-1$, then $O_n \subseteq AB$. 
\item If $n$ is even and $\delta(A)+\delta(B)>n$, then $O_{n-1} \subseteq AB$.
\end{enumerate}
\end{theorem}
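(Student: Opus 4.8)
The plan is to prove each part by showing that a single, fixed long cycle lies in $AB$ and then invoking normality. Since $A$ and $B$ are conjugacy classes of $\Sym(n)$, the product $AB$ is invariant under $\Sym(n)$-conjugation; because $O_n$ (for $n$ odd) and $O_{n-1}$ (for $n$ even) are each a single conjugacy class of $\Sym(n)$, it suffices to exhibit one $n$-cycle, respectively one $(n-1)$-cycle, of the form $ab$ with $a\in A$ and $b\in B$. Writing $g$ for the chosen long cycle, the task is thus to show that the class-algebra structure constant counting pairs $(a,b)\in A\times B$ with $ab=g$ is strictly positive.

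First I would try the character-theoretic route. Fixing $a\in A$ and $b\in B$, this structure constant equals $\tfrac{|A||B|}{n!}\sum_{\lambda\vdash n}\chi^{\lambda}(a)\chi^{\lambda}(b)\chi^{\lambda}(g)/\chi^{\lambda}(1)$, and the Murnaghan--Nakayama rule annihilates every term except the hooks $\lambda=(n-k,1^{k})$, on which $\chi^{\lambda}(g)=(-1)^{k}$ for a full cycle. Using the generating identity $\sum_{k=0}^{n-1}\chi^{(n-k,1^{k})}(\sigma)\,u^{k}=(1+u)^{-1}\prod_{i}\bigl(1-(-u)^{\mu_i}\bigr)$, where $\mu$ runs over the cycle lengths of $\sigma$ (fixed points included), one gets a clean handle on the individual hook values. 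The obstacle is that the sum whose sign must be controlled is a Hadamard product of the two hook generating functions weighted by $(-1)^{k}/\binom{n-1}{k}$, and the dimension denominators $\chi^{(n-k,1^k)}(1)=\binom{n-1}{k}$ obstruct any manifest positivity. I therefore expect the character computation to yield a formula but not positivity, and would instead make the combinatorial theory of factorizations the main line.

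The combinatorial approach rests on the Euler--Riemann--Hurwitz relation for transitive factorizations: if $ab=g$ and $\langle a,b\rangle$ is transitive on the $n$ points, then $c(a)+c(b)+c(g)=n+2-2G$ for an integer $G\ge 0$, the genus, where $c(\cdot)$ denotes the number of cycles counting fixed points, so that $\delta=n-c$. When $g$ is an $n$-cycle the factorization is automatically transitive and $c(g)=1$, whence $\delta(A)+\delta(B)=n-1+2G$; since $A,B\subseteq\Alt(n)$ forces both $\delta$'s to be even, the hypothesis $\delta(A)+\delta(B)>n-1$ gives $G\ge 1$. When $n$ is even and $g$ is an $(n-1)$-cycle, a factorization is either intransitive with $a,b$ both fixing the fixed point of $g$, reducing to $\Sym(n-1)$, or transitive on all $n$ points with $c(g)=2$, giving $\delta(A)+\delta(B)=n+2G$. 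Because the theorem must cover all classes $A,B$, including fixed-point-free ones for which the intransitive model is unavailable, the argument must rely on the transitive model, and the hypothesis $\delta(A)+\delta(B)>n$ again forces $G\ge 1$; this is precisely the source of the asymmetry between the two parts.

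The crux, and the step I expect to be the main obstacle, is then purely existential: for every pair of cycle types $\alpha,\beta$ compatible with a transitive genus-$G$ factorization of the relevant long cycle with $G\ge 1$, such a factorization actually exists. I would attack this by a surgery/induction argument on factorizations of a fixed cycle: the moves $(a,b)\mapsto(a\tau,\tau^{-1}b)$ with $\tau$ a transposition preserve the product $g$ and change each of $\delta(a),\delta(b)$ by $\pm1$, hence the genus by $0$ or $\pm 1$, so that from a single seed factorization one can navigate between all admissible cycle-type pairs. The role of the strict inequality is exactly to guarantee $G\ge 1$, which supplies the extra handle of slack needed to carry out these local modifications uniformly over \emph{all} cycle types; at the genus-$0$ boundary $\delta(A)+\delta(B)=n-1$, respectively $\delta(A)+\delta(B)=n-2$, the construction can fail for special types, which is why the theorem is stated with a strict inequality. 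This existence statement is the combinatorial content isolated by Dvir \cite{Dvir} and reformulated by Rodgers, and I would ultimately invoke it in the precise form of \cite[Theorem 1.1(iii)]{R} to close the argument.
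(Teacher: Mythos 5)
The first thing to say is that the paper contains no proof of Theorem \ref{dr}: it is quoted as a known result, ``a particular case of'' Dvir's theorem \cite{Dvir} in Rodgers' reformulation \cite[Theorem 1.1(iii)]{R}. Since your proposal closes by invoking exactly that reference, you and the paper ultimately land in the same place: the statement rests on the citation. The preparatory material you supply is largely correct and genuinely illuminating about what the hypotheses mean: the reduction by normality to exhibiting a single long cycle in $AB$, the Riemann--Hurwitz count $c(a)+c(b)+c(g)=n+2-2G$ for transitive factorizations, the parity observation that $\delta$ is even on classes inside $\Alt(n)$, and the conclusion that the strict inequalities force genus $G\geq 1$, including the explanation of the asymmetric thresholds $n-1$ versus $n$ in the two parts. (One slip: your claim that Murnaghan--Nakayama kills every non-hook partition is valid only for part (1); for an $(n-1)$-cycle in $\Sym(n)$ the surviving characters are those labelled by partitions containing an $(n-1)$-hook, e.g.\ $(2,2)$ for $n=4$, not only hook shapes.)

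Read as a standalone proof, however, the proposal has a genuine gap, and it is a circular one, located exactly at the step you yourself call the crux. The statement to be proven \emph{is} a special case of \cite[Theorem 1.1(iii)]{R}, so ``invoking it in the precise form of \cite[Theorem 1.1(iii)]{R}'' restates the theorem rather than proving it. What actually needs proof is the existence, for \emph{every} pair of cycle types (with even $\delta$'s) satisfying the stated inequalities, of a factorization $ab=g$ with $g$ the relevant long cycle; the genus computation only determines what the genus of such a factorization would have to be, not that one exists. Your proposed substitute, the surgery $(a,b)\mapsto(a\tau,\tau^{-1}b)$ with $\tau$ a transposition, preserves the product but alters the cycle types of both factors in ways the sketch does not control; navigating from a seed factorization to one realizing two \emph{prescribed} cycle types simultaneously is precisely the inductive construction that constitutes Dvir's proof, and the appeal to ``slack'' from $G\geq 1$ is an expectation, not an argument. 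The honest summary: your reduction and genus bookkeeping are correct and match the geometric content of the hypotheses, but the combinatorial core of the theorem is attributed rather than proven --- which, to be fair, is also all the paper does.
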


Using Dvir's results, Rodgers proved the following \cite[Lemma 2.2]{R}:
\begin{lemma} \label{on2an}
$O_n^2 = O_{n-1}^2 = \Alt(n)$ for every $n$.
\end{lemma}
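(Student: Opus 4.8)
The plan is to prove each of the two equalities by combining the trivial inclusion, obtained from parities of permutations, with the reverse inclusion, obtained from the Frobenius class-multiplication formula. For the easy direction, an $n$-cycle has sign $(-1)^{n-1}$ and an $(n-1)$-cycle has sign $(-1)^{n}$, so in either case the product of two such elements is even; hence $O_n^2 \subseteq \Alt(n)$ and $O_{n-1}^2 \subseteq \Alt(n)$. For the reverse inclusions it suffices to show that every $g \in \Alt(n)$ is a product of two elements of the relevant class, and I would count such factorizations by Frobenius. Writing $c$ for an $n$-cycle (all of which are conjugate in $\Sym(n)$), the number of pairs $(\alpha,\beta) \in O_n \times O_n$ with $\alpha\beta = g$ equals
\[
\frac{|O_n|^2}{n!}\sum_{\chi \in \Irr(\Sym(n))} \frac{\chi(c)^2\,\chi(g)}{\chi(1)},
\]
so it is enough to prove that this quantity is strictly positive.

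The first simplification is that, by the Murnaghan--Nakayama rule, $\chi(c)=0$ unless $\chi=\chi_{(n-k,1^{k})}$ is indexed by a hook, in which case $\chi(c)=(-1)^{k}$, so $\chi(c)^2=1$, while $\chi(1)=\binom{n-1}{k}$. The sum thus collapses to $\sum_{k=0}^{n-1}\chi_{(n-k,1^{k})}(g)/\binom{n-1}{k}$, and the task becomes showing that this is positive for even $g$. I would exploit the classical identification of the hook module $S^{(n-k,1^{k})}$ with the $k$-th exterior power of the standard representation $V$; taking traces gives the generating identity
\[
\sum_{k=0}^{n-1}\chi_{(n-k,1^{k})}(g)\,t^{k}=\frac{1}{1+t}\prod_{\ell\geq 1}\bigl(1-(-t)^{\ell}\bigr)^{a_{\ell}},
\]
where $a_{\ell}$ is the number of $\ell$-cycles of $g$. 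To insert the weights $1/\binom{n-1}{k}$, I would use the Beta-integral identity $1/\binom{n-1}{k}=n\int_{0}^{1}x^{k}(1-x)^{n-1-k}\,dx$; substituting and using $\sum_{\ell}\ell a_{\ell}=n$ to cancel a factor $(1-x)^{n}$, the weighted sum telescopes to
\[
\sum_{k=0}^{n-1}\frac{\chi_{(n-k,1^{k})}(g)}{\binom{n-1}{k}}=n\int_{0}^{1}\prod_{\ell\geq 1}\bigl((1-x)^{\ell}-(-x)^{\ell}\bigr)^{a_{\ell}}\,dx.
\]
On $[0,1]$ each odd-$\ell$ factor equals $(1-x)^{\ell}+x^{\ell}>0$, while each even-$\ell$ factor $(1-x)^{\ell}-x^{\ell}$ is positive on $[0,\tfrac12)$ and negative on $(\tfrac12,1]$; since $g$ is even precisely when its number of even-length cycles is even, the product is nonnegative on $[0,1]$ and strictly positive on $[0,\tfrac12)$, so the integral is positive. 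This yields $\Alt(n)\subseteq O_n^2$ and hence $O_n^2=\Alt(n)$.

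For $O_{n-1}^2$ I would run the identical Frobenius argument with both classes equal to the class of cycle type $(n-1,1)$. By Murnaghan--Nakayama the relevant character values are again $0$ or $\pm 1$, but they are now supported on the slightly larger family of ``near-hook'' shapes $\lambda$ for which $\lambda$ minus a single corner cell is a border strip, and the squared values need not all equal $1$. I expect this bookkeeping---identifying the supporting shapes, pinning down the heights in the signs $(-1)^{\mathrm{ht}}$, and evaluating the resulting $1/\chi(1)$-weighted sum by an analogous generating-function-plus-integral computation---to be the main obstacle, and it is exactly where the $(n-1)$-cycle case is genuinely harder than the $n$-cycle case. An alternative that bypasses this computation is to invoke the full Dvir--Rodgers characterization of which classes occur in a product of two classes, of which Theorem \ref{dr} records only the special case concerning $O_n$ and $O_{n-1}$: since $\delta$ of an $(n-1)$-cycle equals $n-2$, the bound $\delta(A)+\delta(B)=2n-4$ is large enough (for $n\geq 3$) to force every class $C$ with $\delta(C)$ even, that is, every class contained in $\Alt(n)$, to appear in $O_{n-1}^2$, giving the second equality.
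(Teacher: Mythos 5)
Your argument for the first equality $O_n^2=\Alt(n)$ is correct and complete, and it is genuinely independent of the paper: the paper does not prove Lemma \ref{on2an} at all, it quotes it from Rodgers, who deduced it from Dvir's covering machinery. Your route --- the Frobenius count, the collapse of the character sum to hook characters with $\chi_{(n-k,1^k)}(c)=(-1)^k$ and degree $\binom{n-1}{k}$, the identification of hook modules with exterior powers of the standard representation, the Beta-integral substitution, and the parity analysis of the integrand $\prod_{\ell}\bigl((1-x)^{\ell}-(-x)^{\ell}\bigr)^{a_{\ell}}$ --- checks out line by line; as a sanity check, for $g=1$ your formula returns $n\cdot |O_n|^2/n! = |O_n|$ factorizations, as it must.

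However, the second equality $O_{n-1}^2=\Alt(n)$ is not proved, and this is a genuine gap rather than a routine omission. Your primary route (the Frobenius sum over near-hook shapes) is explicitly abandoned as ``the main obstacle,'' so no argument is offered there. Your fallback appeals to ``the full Dvir--Rodgers characterization,'' but you neither state it nor verify that it yields your claim, and it cannot have the shape you implicitly use (``if $\delta(A)+\delta(B)$ is large enough, then every class of the right parity lies in $AB$''): any correct version must also constrain the target class $C$, since for instance a transposition times a $3$-cycle is never of type $(2,2,2)$ although $\delta$ and parity permit it, and, closer to home, the paper's Introduction recalls Brenner's result that the class $C$ of fixed-point-free involutions satisfies $C^3\neq\Alt(n)$ (hence $C^2\neq\Alt(n)$) even though $\delta(C)+\delta(C)=n$. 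With $\delta(O_{n-1})+\delta(O_{n-1})=2n-4$, the classes left unaccounted for by any bound of the form $\delta(C)\leq\delta(A)+\delta(B)-n+c$ are exactly those with $\delta(C)$ close to $n$, e.g.\ the $n$-cycles when $n$ is odd, which lie in $\Alt(n)$ and must be shown to lie in $O_{n-1}^2$. Note also that for $n$ odd the set $O_{n-1}$ is a class of odd permutations, so the paper's Theorem \ref{dr}, which requires both classes to be contained in $\Alt(n)$, does not even apply to it; you would need Dvir's statement for arbitrary classes of $\Sym(n)$, which is stated nowhere. As written, half of the lemma rests on an unspecified black box invoked beyond its plausible strength.
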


Let $G=\Alt(n)$. In order to apply Theorem \ref{dr} in our context, we need to translate the condition $|C| \geq |G|^{\gamma}$, for a conjugacy class $C$ of $G$ and a constant $\gamma$, into a lower bound for $\delta(C)$. This is precisely what we do in the following lemma.

\begin{lemma} \label{deltac}
For every $\gamma$ and $\varepsilon$ with $0 < \gamma < 1$ and $0 < \varepsilon < 1$ there exists $N \in \mathbb{N}$ such that for every $n \geq N$, whenever $x \in G = \Alt(n)$ satisfies $|x^G| \geq |G|^{\gamma}$, then $\delta(x^G) > (\gamma-\varepsilon) n.$
\end{lemma}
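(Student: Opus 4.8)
The plan is to reduce everything to the symmetric group and to convert the hypothesis $|x^G|\ge|G|^{\gamma}$ into cycle-type data for $x$. Write the cycle type of $x$ as $(a_1,a_2,\dots)$, where $a_i$ is the number of $i$-cycles, so that $\sum_i ia_i=n$, the number of cycles is $t=\sum_i a_i$, and $\delta(x^G)=n-t$. Since $x^G\subseteq x^{\Sym(n)}$ we have $|x^G|\le|x^{\Sym(n)}|=n!/\prod_i i^{a_i}a_i!$ while $\delta(x^{\Sym(n)})=\delta(x^G)$, and $|G|^{\gamma}=(n!/2)^{\gamma}$. Hence it suffices to bound $|x^{\Sym(n)}|$ from above by a function of $\delta$ and to combine this with the lower bound $|G|^{\gamma}\le|x^{\Sym(n)}|$. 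The precise target is an estimate of the shape $\log|x^{\Sym(n)}|\le\delta\log n+O(n)$, i.e. $|x^{\Sym(n)}|\le n^{\delta}e^{O(n)}$. I want to stress at the outset that the naive bound obtained by first choosing the support of $x$ (a set of size $s\le 2\delta$) and then an arbitrary permutation of it, namely $|x^{\Sym(n)}|\le n^{s}\le n^{2\delta}$, is off by a factor of $2$ in the exponent and only yields $\delta\gtrsim\tfrac12\gamma n$; the factorials $a_i!$ in the centraliser order must be retained.

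The heart of the argument is therefore the estimate $\log|x^{\Sym(n)}|\le\delta\log n+O(n)$. Starting from $\log|x^{\Sym(n)}|=\log n!-\sum_i a_i\log i-\sum_i\log(a_i!)$ and using $n!\le n^{n}$ together with $a_i!\ge (a_i/e)^{a_i}$, I would reduce the claim, after writing $n\log n=\delta\log n+t\log n$ and $\sum_i a_i\log n=t\log n$, to the single inequality $\sum_i a_i\log\frac{n}{ia_i}\le O(n)$. This is the one genuinely nontrivial point: bounding each summand separately, or summing the per-term maxima $\tfrac{n}{ei}$, only gives $O(n\log n)$, since there can be up to about $\sqrt{2n}$ distinct cycle lengths. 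The correct bound comes from exploiting the global constraint $\sum_i ia_i=n$: the function $a\mapsto\sum_i a_i\log\frac{n}{ia_i}$ is concave on the polytope $\{a_i\ge 0:\sum_i ia_i=n\}$, and a Lagrange-multiplier computation shows that its maximum is attained at $a_i\propto i^{-1}e^{-\lambda i}$ with $\lambda=\log(1+1/e)$, and equals $\big(\lambda+\tfrac1e\log(1+e)\big)n<n$. In particular $\sum_i a_i\log\frac{n}{ia_i}\le n$, which closes the estimate in the form $\log|x^{\Sym(n)}|\le\delta\log n+2n$.

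Finally I would assemble the pieces. From $|G|^{\gamma}\le|x^{\Sym(n)}|$ and the estimate just proved, $\gamma(\log n!-\log 2)\le\delta\log n+2n$. Feeding in $\log n!\ge n\log(n/3)=n\log n-n\log 3$ (from the stated inequality $(n/3)^{n}\le n!$) gives $\gamma(n\log n-n\log 3-\log 2)\le\delta\log n+2n$, whence $\delta\ge\gamma n-\frac{(\gamma\log 3+2)n+\gamma\log 2}{\log n}=\gamma n-O\!\big(n/\log n\big)$. Since the error term is $o(n)$, for every $n$ larger than some $N=N(\gamma,\varepsilon)$ it is smaller than $\varepsilon n$, and then $\delta>(\gamma-\varepsilon)n$, as required. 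The main obstacle is the middle step, and in particular recognising that the factorial contributions to the centraliser — equivalently the concentration forced by the constraint $\sum_i ia_i=n$ — are exactly what removes the spurious factor of $2$ and produces the clean exponent $\delta$ in place of $2\delta$.
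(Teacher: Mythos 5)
Your proof is correct, and it takes a genuinely different route from the paper's. The paper also works from the centralizer order, but instead of a global estimate it splits the cycles of $x$ into short ones (length at most a constant $t$ chosen in terms of $\gamma$ and $\varepsilon$) and long ones: the long cycles number at most $n/(t+1)$, and the number $s$ of short cycles is controlled by keeping only the factorial part $\prod_{i\le t}r_i!$ of the centralizer, via the multinomial inequality $s!/t^s\le\prod_{i\le t}r_i!$ together with $\prod_{i \leq t} r_i!\le|C_G(x)|\le|G|^{1-\gamma}$ and Stirling; a contradiction argument (with auxiliary constants $\varepsilon_1,\varepsilon_2$) then forces $s<n(1-\varepsilon_1)(1-\gamma+\varepsilon)$, whence $\delta\ge n-n/(t+1)-s>(\gamma-\varepsilon)n$. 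You instead retain the full class-size formula and prove the standalone estimate $|x^{\Sym(n)}|\le n^{\delta}e^{2n}$ by maximizing the concave functional $\sum_i a_i\log\frac{n}{ia_i}$ over the polytope $\{a_i\ge 0:\sum_i ia_i=n\}$; your Lagrange computation is right (the maximum value is $\bigl(\log(1+1/e)+\tfrac1e\log(1+e)\bigr)n\approx 0.80\,n<n$, and since the objective is concave the stationary point of the Lagrangian does give a valid global upper bound, by weak duality, which also disposes of the harmless issue that only cycle lengths $i\le n$ occur). You also correctly diagnose why the naive support bound $n^{2\delta}$ loses a factor of $2$ and why the factorials $a_i!$ must be kept — this is exactly the same obstruction the paper's multinomial trick is designed to overcome. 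What your route buys: a clean, reusable inequality bounding $\Sym(n)$-class sizes by $n^{\delta}e^{O(n)}$, an explicit error term $\delta\ge\gamma n-O(n/\log n)$ uniform in $\gamma$, and no cascade of auxiliary epsilons or contradiction with an infinite set. What the paper's route buys: it is more elementary at the key step, needing only multinomial coefficients and Stirling rather than a constrained concave optimization, which is the one point of your argument where full rigor requires some care.
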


\begin{proof}
Let $\gamma$ and $\varepsilon$ be arbitrary positive real numbers less than $1$. Choose $\varepsilon_1$ with $0 < \varepsilon_1 < \varepsilon/(1-\gamma+\varepsilon)$, and observe that $\varepsilon_1 < 1$. Let $x \in G = \Alt(n)$ be such that $|x^G| \geq |G|^{\gamma}$. Let $r_i$ be the number of $i$-cycles in the cycle structure of $x$ for $i=1,\ldots,t$, where $t$ is a fixed positive integer such that $$\frac{1}{2} \varepsilon_1 (1-\gamma+\varepsilon) < \frac{1}{t+1} < \varepsilon_1 (1-\gamma+\varepsilon).$$ Set $s:=\sum_{i=1}^t r_i$. The number of disjoint cycles of $x$ is at most $n/(t+1)+s$, thus $$\delta(x^G) \geq n-\frac{n}{t+1}-s > n-\varepsilon_1(1-\gamma+\varepsilon)n-s.$$ In order to prove the lemma, it is sufficient to show that $s < n (1-\varepsilon_1) (1-\gamma+\varepsilon)$ for every sufficiently large $n$.

Observe that we may assume that $s$ is not bounded above by a fixed universal constant. In particular we assume that $s > 3t$.

By plugging $x_1=\ldots=x_t=1$ into the well-known identity of multinomial coefficients $$(x_1+\cdots+x_t)^s = \sum_{k_1+\ldots+k_t = s} \frac{s!}{k_1! \cdots k_t!} \prod_{i=1}^t x_i^{k_i},$$ we obtain $s!/t^s \leq \prod_{i=1}^t r_i!$.

An easy application of Stirling's inequality gives $s! \geq (s/3)^s$. Since $\gamma > 0$ and $|x^G| \geq |G|^{\gamma}$, $x$ is not the identity and so $\prod_{i=1}^t r_i! \leq |C_G(x)| \leq |G|^{1-\gamma}$. We obtain $$(s/3t)^s \leq s!/t^s \leq \prod_{i=1}^t r_i! \leq |C_G(x)| \leq (n!/2)^{1-\gamma} < n^{n(1-\gamma)}.$$ Taking natural logarithms we obtain $s \log(s/3t) < n \left( 1-\gamma \right) \log(n)$. Since $s > 3t$, we obtain $$s < n(1-\gamma) \frac{\log(n)}{\log(s/3t)}.$$ Let $$\mathcal{A} := \{(n,s) \in \mathbb{N} \times \mathbb{N}\ :\ \exists x \in G\ :\ |x^G| \geq |G|^{\gamma}\ \mbox{and}\ s \geq n(1-\varepsilon_1)(1-\gamma+\varepsilon)\}.$$In order to prove the result, it is enough to show that $\mathcal{A}$ is finite. Assume by contradiction that $\mathcal{A}$ is infinite. Observe that, if $(n,s) \in \mathcal{A}$, then
\begin{align*}
\frac{\log(n)}{\log(s/3t)} & = \frac{\log(n)}{\log(s)-\log(3t)} \leq \frac{\log(n)}{\log(n)+\log((1-\varepsilon_1)(1-\gamma+\varepsilon))-\log(3t)}.
\end{align*}
It follows that $\log(n)/\log(s/3t)$ tends to $1$ as $(n,s) \in \mathcal{A}$ and $n$ goes to infinity, since $\varepsilon_1$, $\gamma$, $\varepsilon$ and $t$ are fixed. 

Since $\frac{\varepsilon(1-\varepsilon_1)}{1-\gamma} > \frac{\varepsilon}{1-\gamma+\varepsilon} > \varepsilon_1$, there exists $\varepsilon_2$ such that $0 < \varepsilon_2 < \frac{\varepsilon(1-\varepsilon_1)}{1-\gamma}-\varepsilon_1$. If $(n,s) \in \mathcal{A}$ and $n$ is sufficiently large, we have $\log(n)/\log(s/3t) \leq 1+\varepsilon_2$, therefore
\begin{align*}
s & < n(1-\gamma)(1+\varepsilon_2) < n(1-\gamma) \left( 1+\frac{\varepsilon(1-\varepsilon_1)}{1-\gamma}-\varepsilon_1 \right) = n (1-\varepsilon_1)(1-\gamma+\varepsilon).
\end{align*}
This contradicts the fact that $(n,s) \in \mathcal{A}$.
\end{proof}

\section{Background on character sums} \label{chsum}

Let $\Irr(G)$ denote the set of irreducible complex characters of a finite group $G$.

If $A,B$ are conjugacy classes of $G$ we are interested to know which conjugacy classes the normal set $AB$ contains. Fix $a \in A$, $b \in B$, $g \in G$. Then \cite[page 43]{ASH} gives $$|\{(x,y) \in A \times B\ :\ xy=g\}| = \frac{|A||B|}{|G|} \left( \sum_{\chi \in \Irr(G)} \frac{\chi(a)\chi(b)\overline{\chi(g)}}{\chi(1)} \right).$$It follows that the conjugacy class of $g$ in $G$ is contained in the normal set $AB$ if and only if 
\begin{equation} \label{cs}
\sum_{\chi \in \Irr(G)} \frac{\chi(a)\chi(b) \overline{\chi(g)}}{\chi(1)} \neq 0.
\end{equation}
Assume $G$ is the alternating group $\Alt(n)$. The strategy to show that condition (\ref{cs}) holds will often be the following: first, we separate the contribution of the trivial character in the sum, which is $1$, then we show that the remaining part tends to $0$ when $n$ tends to infinity. This implies that in this case condition (\ref{cs}) holds when $n$ is sufficiently large.

\section{Background on characters of $\Alt(n)$} \label{symalt}

In this section we review some basic facts about the characters of the alternating and symmetric groups. Everything here may be found in \cite[Chapter 2]{JamesKerber}.

A partition $\lambda = (\lambda_1,\ldots,\lambda_t)$ of $n$ is a sequence of positive integers $\lambda_1 \geq \ldots \geq \lambda_t$ such that $\lambda_1+\cdots+\lambda_t=n$. The partitions of $n$ correspond bijectively to the cycle structures of the elements of $\Sym(n)$ and to the Young diagrams of size $n$. Each partition $\lambda$ of $n$ determines uniquely a complex irreducible character $\chi_{\lambda}$ of $\Sym(n)$, and these are precisely the complex irreducible characters of $\Sym(n)$. We will use the well-known Murnaghan-Nakayama rule to compute character values, and the well-known hook length formula to compute character degrees.

We define $l$ to be $n$ if $n$ is odd and $n-1$ if $n$ is even. An $l$-hook will be a hook of length $l$. An immediate consequence of the Murnaghan-Nakayama rule is the following. Assume that $\lambda$ is a partition of $n$ and $g \in \Sym(n)$ is an $l$-cycle. If $\lambda$ contains an $l$-hook then $\chi_{\lambda}(g)=(-1)^k$ where $k$ is the leg length of the unique $l$-hook of $\lambda$. If $\lambda$ does not contain an $l$-hook, then $\chi_{\lambda}(g)=0$.

Before describing the irreducible complex characters of $\Alt(n)$ we introduce a notation. The conjugacy class of an element $g$ of $\Alt(n)$ may or may not be equal to its conjugacy class in $\Sym(n)$. If it is (resp. if it is not), we call $g$ a non-exceptional (resp. exceptional) element, the conjugacy class of $g$ a non-exceptional (resp. exceptional) class and the cycle type of $g$ a non-exceptional (resp. exceptional) cycle type. Since partitions of $n$ correspond bijectively to cycle types, we may also talk about exceptional and non-exceptional partitions. Recall that an element of $\Alt(n)$ is exceptional if and only if the lengths of the cycles in its disjoint cycle decomposition are odd and pairwise distinct (including $1$-cycles), and that the conjugacy class in $\Sym(n)$ of an exceptional element is the union of precisely two conjugacy classes of $\Alt(n)$ of equal size. An important example of an exceptional element is given by any $l$-cycle, where $l$ is defined in the previous paragraph.

The irreducible complex characters of $\Alt(n)$ are described as follows. Let $\lambda$ be a partition of $n$. Denote by $\lambda'$ the partition adjoint to $\lambda$, obtained by reflecting its Young diagram through the main diagonal. The partition $\lambda$ is said to be self-adjoint if $\lambda = \lambda'$. If $\lambda \neq \lambda'$, then the restriction of the character $\chi_{\lambda}$ to $\Alt(n)$ is an irreducible character of $\Alt(n)$, which we denote by $\psi_{\lambda}$. Clearly, $\psi_{\lambda} = \psi_{\lambda'}$ in this case. If $\lambda = \lambda'$, then the restriction of the character $\chi_{\lambda}$ to $\Alt(n)$ is the sum of two irreducible characters $\psi_{\lambda}^+$, $\psi_{\lambda}^-$ of $\Alt(n)$. Every irreducible complex character of $\Alt(n)$ is of the form $\psi_{\lambda}$ where $\lambda$ is a non-self-adjoint partition or $\psi_{\lambda}^{\pm}$ where $\lambda$ is a self-adjoint partition.

Given a self-adjoint partition $\lambda$ of $n$, denote by $h(\lambda)$ the partition whose parts are the lengths of those hooks of $\lambda$ whose heads are in the main diagonal. Observe that $h(\lambda)$ is an exceptional partition. For example, if $\lambda$ is a self-adjoint hook, then $h(\lambda)$ is the partition $(n)$.

Let $\lambda$ be a partition of $n$ and let $x \in \Alt(n)$. If $\lambda \neq \lambda'$, then $\psi_{\lambda}(x) = \chi_{\lambda}(x)$. If $\lambda = \lambda'$ and the cycle type of $x$ is not $h(\lambda)$, then $\psi_{\lambda}^{\pm}(x) = \chi_{\lambda}(x)/2$. If $\lambda = \lambda'$ and the cycle type of $x$ is $h(\lambda)$, then $x$ is an exceptional element, hence there exists $y \in \Sym(n)$ which is conjugate to $x$ in $\Sym(n)$ but not in $\Alt(n)$. Let $h^{\lambda}_{ii}$ be the length of the hook in $\lambda$ with head in position $(i,i)$. We have $$\psi_{\lambda}^{\pm}(x) = \frac{1}{2} \left( \chi_{\lambda}(x) \pm \sqrt{\chi_{\lambda}(x) \prod_{i}h^{\lambda}_{ii}} \right), \hspace{.3cm} \psi_{\lambda}^{\pm}(y) = \frac{1}{2} \left( \chi_{\lambda}(y) \mp \sqrt{\chi_{\lambda}(y) \prod_{i}h^{\lambda}_{ii}} \right).$$

An important consequence for us is the following. Let $l$ be $n$ if $n$ is odd and $n-1$ if $n$ is even and let $x$ be an $l$-cycle. Let $\lambda$ be a partition of $n$. If $\lambda$ does not contain an $l$-hook, then $\psi_{\lambda}(x) = 0$. Assume $\lambda$ contains an $l$-hook. Observe that in this case $\lambda$ contains a unique $l$-hook. If $\lambda \neq \lambda'$, then $\psi_{\lambda}(x) = \chi_{\lambda}(x) = (-1)^k$ where $k$ is the leg length of the $l$-hook contained in $\lambda$. If $\lambda = \lambda'$, then $|\psi_{\lambda}^{\pm}(x)| \leq \sqrt{n}$.

\section{Proof of Theorem \ref{main}} \label{proofmain}

In this section we prove Theorem \ref{main}. First we need a list of technical lemmas.

The following is a special case of \cite[Theorem 1.2]{LarsenShalev}.

\begin{theorem}[Larsen, Shalev] \label{ls}
If $\sigma \in \mathrm{Sym}(n)$ has at most $n^{o(1)}$ cycles of length at most $5$, then $|\chi(\sigma)| \leq \chi(1)^{1/5 + o(1)}$ for every complex irreducible character $\chi$ of $\mathrm{Sym}(n)$. 
\end{theorem}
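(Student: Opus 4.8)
The plan is to reduce the character value to a combinatorial count via the Murnaghan--Nakayama rule and then estimate that count against the degree. Write $\chi = \chi_\lambda$ for a partition $\lambda \vdash n$, and let $k_1 \ge \cdots \ge k_r$ be the cycle lengths of $\sigma$, so that $\sum_j k_j = n$. Stripping $\sigma$'s cycles one at a time gives
\[
\chi_\lambda(\sigma) = \sum_{T} (-1)^{\mathrm{ht}(T)},
\]
where $T$ ranges over the rim-hook tableaux of shape $\lambda$ and content $(k_1,\ldots,k_r)$, i.e.\ the sequences of successive removals of rim hooks of lengths $k_1,\ldots,k_r$ that empty the diagram, and $\mathrm{ht}(T)$ is the total leg length. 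Hence $|\chi_\lambda(\sigma)| \le N_\lambda(\sigma)$, the number of such tableaux, and the whole theorem reduces to the combinatorial inequality $N_\lambda(\sigma) \le \chi_\lambda(1)^{1/5 + o(1)}$, to be proved uniformly in $\lambda$.

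The guiding computation is the single-length case. If all cycles of $\sigma$ have the same length $k$ and $\lambda$ has empty $k$-core, then the theory of $k$-quotients factors the count as a multinomial coefficient in the sizes of the quotient partitions $\lambda^{(0)},\ldots,\lambda^{(k-1)}$ times $\prod_i f^{\lambda^{(i)}}$, where $f^{\mu}$ denotes the number of standard Young tableaux of shape $\mu$ and $f^\lambda = \chi_\lambda(1)$. Estimating logarithms with the hook-length formula and Stirling, one finds $\log N_\lambda(\sigma) \approx \tfrac{1}{k}\log f^\lambda$ in the bulk, since the quotient partitions each have size about $n/k$ and the multinomial factor is of lower order. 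Thus a cycle of length $k$ produces exponent roughly $1/k$. The hypothesis that $\sigma$ has at most $n^{o(1)}$ cycles of length at most $5$ means that, apart from a negligible set of cells, $\lambda$ is stripped by rim hooks of length $\ge 6$; the few short cycles can perturb the exponent by only $o(1)$, while the long cycles keep it below $1/5$. This is how the threshold $5$ feeds into the target $\chi_\lambda(1)^{1/5+o(1)}$.

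The main obstacle is to turn this heuristic into a bound that is uniform over all partitions and valid for genuinely mixed cycle types. For mixed types one must interleave the $k$-quotient analysis across the different cycle lengths, controlling how each removal constrains the remaining diagram; and the comparison $\log N_\lambda(\sigma) \le (\tfrac15 + o(1))\log \chi_\lambda(1)$ has essentially no slack when $\chi_\lambda(1)$ is small, so the extremal shapes — near-rectangular, hook-like, or staircase partitions, where character values are comparatively large — must be isolated and bounded by hand, matching each against the $\chi_\lambda(1)^{1/5+o(1)}$ target. Carrying this out uniformly is precisely the content of \cite[Theorem 1.2]{LarsenShalev}, of which the present statement is a special case, and I would follow that argument rather than reconstruct the extremal analysis from scratch.
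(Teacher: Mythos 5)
The paper gives no proof of this statement at all: it is imported verbatim as a special case of \cite[Theorem 1.2]{LarsenShalev}, which is exactly where your proposal ends up as well, so in effect you take the same route as the paper. Your Murnaghan--Nakayama/rim-hook-tableau sketch and the $k$-quotient heuristic are a fair account of the ideas behind that theorem, but since you (rightly) defer the uniform extremal analysis to the cited result rather than reconstructing it, your treatment amounts to the same citation the paper uses.
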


This theorem will be applied in the special case when $\sigma$ is an exceptional element in $\Sym(n)$, and $\chi=\chi_{\lambda}$, where $\lambda$ is a partition containing an $n$-hook for $n$ odd and an $(n-1)$-hook for $n$ even.

Let $l$ be $n$ if $n$ is odd and $n-1$ if $n$ is even.

\begin{lemma} \label{degalt}
Let $n \geq 9$ and let $G=\Alt(n)$. Let $\psi \in \Irr(G)$ be a nontrivial character associated to a partition $\lambda$ of $n$ containing an $l$-hook. Either $\psi(1) \geq n(n-3)/2$ or $n$ is odd, $\psi$ is equal to the restriction of $\chi_{\lambda}$ to $G$ where $\lambda=(n-1,1)$ and $\psi(1)=n-1$. Moreover if $\lambda=\lambda'$, then $\psi(1) \geq 2^{n-2}/n^2$.
\end{lemma}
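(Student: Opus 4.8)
The plan is to first pin down exactly which partitions $\lambda$ of $n$ contain an $l$-hook, then compute the relevant degrees by the hook length formula, and finally estimate them. For the classification I would use that every hook length of $\lambda$ is at most the principal hook length $h_{11}=\lambda_1+\lambda_1'-1$, and that $h_{11}\le n$; hence a partition containing an $l$-hook must have $h_{11}\in\{l,l+1\}$. When $n$ is odd ($l=n$) this forces $h_{11}=n$, so the first row and first column already exhaust all $n$ boxes and $\lambda$ is a hook $(n-k,1^k)$. When $n$ is even ($l=n-1$): if $h_{11}=n$ then $\lambda$ is again a hook, and a direct inspection of its hook lengths shows a nontrivial hook contains no $(n-1)$-hook; therefore $h_{11}=n-1$, leaving exactly one box off the principal hook, necessarily at position $(2,2)$, so $\lambda=(a,2,1^{b})$ with $a\ge 2$ and $a+b=n-2$.

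Next I would record the degrees. For a hook $\chi_{(n-k,1^{k})}(1)=\binom{n-1}{k}$, while for the near-hook the hook length formula gives $\chi_{(a,2,1^{b})}(1)=\frac{n!\,(a-1)(b+1)}{(n-1)\,a!\,(b+2)!}$; in particular $(n-2,2)$ and its conjugate $(2,2,1^{n-4})$ both yield exactly $n(n-3)/2$. I would then prove the main dichotomy. For $n$ odd the values $k\in\{0,n-1\}$ give the excluded trivial character of $\Alt(n)$, the values $k\in\{1,n-2\}$ give precisely the character $\psi_{(n-1,1)}$ of degree $n-1$ (note $(n-1,1)$ and $(2,1^{n-2})$ are conjugate, hence restrict to the same irreducible of $\Alt(n)$, matching the statement), and every remaining $k$ satisfies $\binom{n-1}{k}\ge\binom{n-1}{2}>n(n-3)/2$; in the self-adjoint case $k=(n-1)/2$ the degree halves to $\tfrac12\binom{n-1}{(n-1)/2}$, which still exceeds $n(n-3)/2$ using $\binom{n-1}{(n-1)/2}\ge 2^{n-1}/n$ after checking the first small value $n=9$ by hand. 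For $n$ even I would show the minimum of $\chi_{(a,2,1^{b})}(1)$ over admissible $a$ equals $n(n-3)/2$, attained at the extremes $a=2,n-2$: since $\chi_{(a,2,1^b)}(1)=\chi_{(n-a,2,1^{a-2})}(1)$ the degree is symmetric in $a\mapsto n-a$, and the ratio of consecutive degrees reduces the monotonicity on $2\le a\le (n-2)/2$ to the inequality $ab(b+2)\ge(a^{2}-1)(b+1)$, which holds whenever $b\ge a$; thus the degree climbs from each end toward the exponentially large middle.

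Finally, for the moreover clause I would isolate the unique self-adjoint partition containing an $l$-hook in each parity. For $n$ odd this is the hook $((n+1)/2,1^{(n-1)/2})$, giving $\psi^{\pm}(1)=\tfrac12\binom{n-1}{(n-1)/2}\ge 2^{n-2}/n\ge 2^{n-2}/n^{2}$. For $n$ even it is $(n/2,2,1^{n/2-2})$, for which the degree formula gives $\psi^{\pm}(1)=\tfrac12\chi_{\lambda}(1)=\frac{(n/2-1)^{2}}{2(n-1)}\binom{n}{n/2}$, and then the elementary bound $\binom{n}{n/2}\ge 2^{n}/(n+1)$ yields $\psi^{\pm}(1)\ge 2^{n-2}/n^{2}$ for all $n\ge 9$.

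The step I expect to be the main obstacle is the even-$n$ degree minimization: both establishing cleanly that $\chi_{(a,2,1^b)}(1)$ is minimized at the extreme partitions (so that $n(n-3)/2$ is genuinely a lower bound and not merely attained), and controlling the self-adjoint middle value, require more care than the odd case, which is essentially bookkeeping with binomial coefficients.
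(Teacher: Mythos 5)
Your proof is correct and follows essentially the same route as the paper's: both identify the partitions containing an $l$-hook (hooks for odd $n$, near-hooks $(a,2,1^{b})$ for even $n$; the paper does this implicitly via the leg length $k$ of the unique $l$-hook), compute the degrees by the hook length formula, and verify the bounds by elementary binomial estimates, your ratio-plus-conjugation-symmetry minimization being a repackaging of the paper's comparison $d(n,k)\ge d(n,1)$. One line to add in the write-up: for even $n$ the self-adjoint character must also meet the main bound $\psi^{\pm}(1)\ge n(n-3)/2$, which your minimization of $\chi_{\lambda}(1)$ alone does not give after halving, but which follows immediately from your displayed estimate $\psi^{\pm}(1)\ge\frac{(n/2-1)^{2}}{2(n-1)}\cdot\frac{2^{n}}{n+1}$, since this exceeds $n(n-3)/2$ for all even $n\ge 10$.
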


\begin{proof}
Since $\psi$ is nontrivial, both $\lambda$ and $\lambda'$ are different from $(n)$. Let $\lambda=\lambda'$. The hook length formula implies $$\psi(1) = \left\{ \begin{array}{ll} \frac{1}{2} \binom{n}{n/2} \frac{(n/2-1)^2}{n-1} & \mbox{if}\ n\ \mbox{is even}, \\ & \\ \frac{1}{2} \binom{n-1}{(n-1)/2} & \mbox{if}\ n\ \mbox{is odd}. \end{array} \right.$$It follows that $\psi(1) \geq n(n-3)/2$ for $n \geq 9$. In any case $\psi(1) \geq 2^{n-2}/n^2$. Now let $\lambda \neq \lambda'$. Let $k \geq 1$ be the leg length of the unique $l$-hook contained in $\lambda$. We will use the hook length formula to compute $\psi(1)$. If $n$ is odd, then $\psi(1) = \binom{n-1}{k}$. This is either $n-1$ or at least $\binom{n-1}{2}$. Let $n$ be even. Without loss of generality, $k$ satisfies $1 \leq k \leq n/2-1$ and $$\psi(1) = d(n,k) := \binom{n}{k+1} \frac{k(n-k-2)}{n-1}.$$ We claim that $d(n,k) \geq \binom{n-1}{2}$. Observe that if $k \geq 3$ we have $k(n-k-2) \geq n-3$ whenever $n \geq 6$, hence $d(n,k) \geq d(n,1)$, and if $n \geq 8$ then $$d(n,1) \leq d(n,2) \leq d(n,3).$$ This implies that if $n \geq 8$ then, for any $k$ between $1$ and $n/2-1$, we have $d(n,k) \geq d(n,1) = n(n-3)/2$.
\end{proof}

Denote the set of all $l$-cycles in $\Sym(n)$ by $O_l$.

\begin{lemma} \label{excon}
Let $A$ and $B$ be conjugacy classes of $G=\Alt(n)$, and let $n$ be sufficiently large.
\begin{enumerate}
\item If $A$ and $B$ are exceptional classes, then $AB$ contains $O_l$.
\item If $A$ and $B$ are classes of $l$-cycles, then $AB$ contains every exceptional class.
\item If $A$ is a class of $l$-cycles, then $O_lA=G$.
\item The product of any three classes of $l$-cycles equals $G$.
\end{enumerate}
\end{lemma}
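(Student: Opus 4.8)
The plan is to treat all four parts with the character-sum criterion \eqref{cs}, using that in each case the factors evaluated on $l$-cycles kill every character $\psi_\lambda$ whose partition $\lambda$ contains no $l$-hook, and that the surviving characters are few and of controlled degree. Recall that an $l$-cycle is itself exceptional, and that by Section \ref{symalt} a partition of $n$ contains an $l$-hook only in a restricted way: for $n$ odd these are exactly the hooks, and in general there is a single self-adjoint such $\lambda$, of exponentially large degree.

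For part (1) fix an $l$-cycle $g$ and exceptional $a \in A$, $b \in B$. In \eqref{cs} only $l$-hook partitions survive; separating the trivial character (contributing $1$), I would bound the rest using $|\psi_\lambda(g)| = 1$ for non-self-adjoint $\lambda$ (and $|\psi_\lambda^{\pm}(g)| \le \sqrt{n}$ for the lone self-adjoint one), together with Theorem \ref{ls} applied to the exceptional $a,b$, giving $|\psi_\lambda(a)|,|\psi_\lambda(b)| \le \psi_\lambda(1)^{1/5+o(1)}$. Each term is then at most $\psi_\lambda(1)^{-3/5+o(1)}$; by Lemma \ref{degalt} the smallest nontrivial degree is $n-1$ (for $n$ odd) or $\sim n^2/2$, and degrees grow fast enough that the sum is dominated by its smallest-degree term and tends to $0$, so \eqref{cs} holds and $O_l \subseteq AB$. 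Part (2) is the mirror image: now $a,b$ are $l$-cycles, so $|\psi_\lambda(a)|,|\psi_\lambda(b)| \le \sqrt n$, while $g$ is exceptional and Theorem \ref{ls} gives $|\psi_\lambda(g)| \le \psi_\lambda(1)^{1/5+o(1)}$; each term is at most $\psi_\lambda(1)^{-4/5+o(1)}$ and the same domination forces the sum to $0$.

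Part (3) is where I expect the genuine difficulty: the target $g$ is arbitrary and may have many short cycles, so Theorem \ref{ls} is unavailable and the crude bound $|\psi_\lambda(g)| \le \psi_\lambda(1)$ is worthless. I would therefore compute exactly. Writing the number of factorizations $g = xa$ with $x \in O_l$ and $a \in A$ via characters, using that $O_l$ splits into the two $\Alt(n)$-classes of $l$-cycles and that $\psi_\lambda(a)^2 = 1$, the whole count becomes a positive multiple of
\[
S := \sum_{k=0}^{n-1} \frac{\chi_{(n-k,1^k)}(g)}{\binom{n-1}{k}},
\]
after checking that the self-adjoint character contributes exactly its $S$-term whenever $g$ is not an $l$-cycle. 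To show $S>0$ I would feed the generating identity $\sum_{k}\chi_{(n-k,1^k)}(g)\,u^k = (1+u)^{-1}\prod_{c}(1-(-u)^c)$ (product over the cycle lengths $c$ of $g$) and the Beta-integral $\binom{n-1}{k}^{-1} = n\int_0^1 t^k(1-t)^{n-1-k}\,dt$ into $S$; the substitution $u = t/(1-t)$ collapses the integrand and yields
\[
S = n\int_0^1 \prod_{c}\bigl[(1-t)^c-(-t)^c\bigr]\,dt .
\]
Each factor is strictly positive on $(0,\tfrac12)$ (odd $c$ give $(1-t)^c+t^c$, even $c$ give $(1-t)^c-t^c$), and since $g \in \Alt(n)$ the integrand is invariant under $t \mapsto 1-t$, so $S$ is twice a positive integral and $S>0$; hence $g \in O_lA$. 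The single excluded case, $g$ an $l$-cycle, is handled by part (2), which gives $O_l \subseteq A^2 \subseteq O_lA$.

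Part (4) is then immediate: if $A,B,C$ are classes of $l$-cycles, part (2) gives $AB \supseteq O_l$ (both $l$-cycle classes being exceptional), whence $ABC \supseteq O_lC = G$ by part (3). The main obstacle is the exact evaluation in part (3); the step I expect to carry the proof is the conversion of the weighted character sum $S$ into the manifestly sign-controlled, symmetric integral, and the delicate bookkeeping is making sure the self-adjoint character's contribution really reduces the $\Alt(n)$-sum to $S$ away from the $l$-cycle class.
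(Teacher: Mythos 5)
Your treatment of parts (1) and (4) coincides with the paper's, and your part (2) is a correct alternative: the paper never re-runs the character estimate for part (2), but instead applies part (1) to the exceptional classes $C$ and $B^{-1}$ to get $A \subseteq CB^{-1}$, then rewrites $a = cb^{-1}$ as $c = ab$ to conclude $C \subseteq AB$. Your mirror-image estimate ($|\psi(a)|,|\psi(b)| \leq \sqrt{n} \leq \psi(1)^{o(1)}$ on the $l$-hook characters by Lemma \ref{degalt}, Theorem \ref{ls} applied to the exceptional target $g$, each term $\leq \psi(1)^{-4/5+o(1)}$) does work, since an exceptional element has at most three cycles of length at most $5$; it just spends another run of the same machinery where the paper uses a one-line inversion trick.

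The genuine gap is in part (3), which is also where you depart furthest from the paper. Your reduction of the number of factorizations $g=xa$, $x \in O_l$, $a \in A$, to a positive multiple of $S$, and the collapse of $S$ to $n\int_0^1 \prod_c [(1-t)^c-(-t)^c]\,dt$ with the $t \mapsto 1-t$ symmetry (valid because an even permutation has an even number of even cycles), are correct --- but only when $l=n$, i.e.\ for $n$ odd. Every ingredient is tied to the hook partitions $(n-k,1^k)$: they are exactly the partitions with an $n$-hook, their degrees are $\binom{n-1}{k}$, and the generating identity you quote is the expansion over hooks of $n$ (indeed it counts factorizations into $n$-cycles, which do not even lie in $\Alt(n)$ when $n$ is even). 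For $n$ even the lemma concerns $(n-1)$-cycles; the characters surviving in \eqref{cs} are those labelled by the $(n-1)$-hook partitions of $n$, namely $(n)$, $(1^n)$ and $(n-k,2,1^{k-2})$, whose degrees are $\binom{n}{k+1}k(n-k-2)/(n-1)$ (cf.\ Lemma \ref{degalt}), and no analogue of your identity/Beta-integral collapse is supplied. So as written your proof of part (3), and hence of part (4), covers only odd $n$. The paper sidesteps this parity issue entirely: it quotes Rodgers' Lemma \ref{on2an} ($O_l^2=\Alt(n)$ for every $n$), notes that $O_lO_l = O_lA \cup \tau(O_lA)\tau^{-1}$ for an odd permutation $\tau$, so the normal set $O_lA$ meets every $\Sym(n)$-class and therefore contains every non-exceptional class, and then obtains the exceptional classes from part (2). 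To repair your argument, either invoke Lemma \ref{on2an} in this way, or work out the even-$n$ analogue of the integral formula, which exists but is genuinely messier.
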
 

\begin{proof}
We prove part (1). Assume $A$ and $B$ are exceptional classes of $G$ and let $D$ be the conjugacy class in $G$ of an $l$-cycle. Let $x \in A$, $y \in B$, and $g \in D$. Let $\psi \in \Irr(G)$ be arbitrary, and let $\lambda$ be the partition of $n$ associated to $\psi$. Then $\psi(g) = 0$, unless the Young diagram of $\lambda$ contains an $l$-hook, by Section \ref{symalt}. Therefore, it is enough to show that (cf. Section \ref{chsum}) the rational number $$\Sigma := \sum_{\psi} \frac{\psi(x)\psi(y) \overline{\psi(g)}}{\psi(1)}$$ tends to $0$ as $n$ tends to infinity, where the sum is over the nontrivial irreducible characters of $G$ corresponding to partitions containing an $l$-hook. Let $z$ be any of $x$, $y$, or $g$. For such characters $\psi$ we have $|\psi(z)| \leq |\chi_{\lambda}(z)|$, unless $\lambda$ is self-adjoint and $z$ is an $l$-cycle. Let $\lambda$ be self-adjoint, and assume $z$ is an $l$-cycle. We have $|\psi(z)| \leq \sqrt{n}$, which is at most ${\psi(1)}^{o(1)}$ as $n \to \infty$ by Lemma \ref{degalt}. Since $x,y,g$ are exceptional elements, we obtain the following by Theorem \ref{ls} and Lemma \ref{degalt} for $n$ sufficiently large. 
\begin{align*}
|\Sigma| & \leq \sum_{\psi} \Big| \frac{\psi(x) \psi(y) \overline{\psi(g)}}{\psi(1)} \Big| = \sum_{\psi} \frac{|\psi(x)| |\psi(y)| |\psi(g)|}{\psi(1)} \leq \sum_{\psi} \psi(1)^{-3/5+o(1)} \\ & \leq (n-1)^{-3/5+o(1)}+n \left( \frac{n(n-3)}{2} \right)^{-3/5+o(1)},
\end{align*}
where the sums are over the nontrivial irreducible characters $\psi$ of $G$ associated to partitions containing an $l$-hook. Therefore $|\Sigma|$ tends to $0$ as $n \to \infty$.

We prove part (2). Assume $A$, $B$ are classes of $l$-cycles and $C$ is any exceptional class. We need to show that $C \subseteq AB$. Since the conjugacy class $B^{-1}$ of the inverse of an element of $B$ is an exceptional class, $A \subseteq CB^{-1}$ by part (1). Fix $a \in A$. There exist $b \in B$, $c \in C$ such that $a = cb^{-1}$, so that $c=ab \in AB$. Since $C$ is the conjugacy class of $c$ and $AB$ is a normal set, $C \subseteq AB$ follows.

We prove part (3). Let $A$ be a class of $l$-cycles. Then, since $O_lO_l=G$ (by Lemma \ref{on2an}), $O_lA$ contains a representative of every conjugacy class of $\Sym(n)$, therefore $O_lA$ contains every non-exceptional class. On the other hand, $O_l$ contains a class of $G$ consisting of $l$-cycles, hence $O_lA$ also contains every exceptional class by part (2). It follows that $O_lA=G$.

We prove part (4). Let $A,B,C$ be classes of $l$-cycles. Then $AB$ contains $O_l$ by part (1), so $ABC=G$ by part (3).
\end{proof}

\begin{lemma} \label{onsa}
Let $A,B$ be conjugacy classes of $G=\Alt(n)$. If neither $A$ nor $B$ is contained in $O_l$ and $AB$ contains an $l$-cycle, then $AB$ contains $O_l$.
\end{lemma}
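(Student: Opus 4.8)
The plan is to apply the character-sum criterion (\ref{cs}) and to use the hypothesis on $A$ and $B$ to show that the normal set $AB$ cannot contain one of the two $\Alt(n)$-classes of $l$-cycles without containing the other.

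First I would record that $O_l$ is a union of two $\Alt(n)$-conjugacy classes. Indeed an $l$-cycle is an exceptional element (its cycle lengths are odd and pairwise distinct), so its $\Sym(n)$-class splits into two $\Alt(n)$-classes $D_1$ and $D_2$ with $O_l = D_1 \cup D_2$. Since $AB$ is normal, it contains $D_i$ as soon as it meets $D_i$; thus the hypothesis that $AB$ contains an $l$-cycle means $D_1 \subseteq AB$ or $D_2 \subseteq AB$, while the desired conclusion $O_l \subseteq AB$ is the conjunction of $D_1 \subseteq AB$ and $D_2 \subseteq AB$. Hence it suffices to prove that $D_1 \subseteq AB$ if and only if $D_2 \subseteq AB$.

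Fix $a \in A$, $b \in B$ and representatives $g_1 \in D_1$, $g_2 \in D_2$ that are conjugate in $\Sym(n)$. By (\ref{cs}), $D_i \subseteq AB$ is equivalent to $\Sigma_i \ne 0$, where $\Sigma_i := \sum_{\psi} \psi(a)\psi(b)\overline{\psi(g_i)}/\psi(1)$ and the sum runs over $\Irr(G)$. I would prove the stronger statement $\Sigma_1 = \Sigma_2$, from which the equivalence is immediate. To compare the two sums I examine $\psi(g_1) - \psi(g_2)$ term by term using Section \ref{symalt}. For $\lambda \ne \lambda'$ we have $\psi_{\lambda}(g_i) = \chi_{\lambda}(g_i)$, which agrees on $g_1$ and $g_2$ because they are $\Sym(n)$-conjugate; and for $\lambda = \lambda'$ whose cycle type differs from $h(\lambda)$ we have $\psi_{\lambda}^{\pm}(g_i) = \chi_{\lambda}(g_i)/2$, again equal. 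So the only possible contribution to $\Sigma_1 - \Sigma_2$ comes from a self-adjoint $\lambda$ with $h(\lambda)$ equal to the cycle type of an $l$-cycle, namely $(n)$ for $n$ odd and $(n-1,1)$ for $n$ even. A self-adjoint partition is determined by its principal hook lengths, so there is exactly one such $\lambda_0$ --- the self-adjoint hook when $n$ is odd, and $(n/2,2,1^{n/2-2})$ when $n$ is even --- and only the pair $\psi_{\lambda_0}^{+}, \psi_{\lambda_0}^{-}$ survives in $\Sigma_1 - \Sigma_2$.

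Finally I would evaluate this remaining contribution. The exceptional-value formulas of Section \ref{symalt} give $\psi_{\lambda_0}^{+}(g_1) - \psi_{\lambda_0}^{+}(g_2) = \sqrt{\chi_{\lambda_0}(g_1)\prod_i h^{\lambda_0}_{ii}}$, with the opposite sign for $\psi_{\lambda_0}^{-}$, and $\psi_{\lambda_0}^{+}(1) = \psi_{\lambda_0}^{-}(1)$, so that
\[ \Sigma_1 - \Sigma_2 = \frac{\overline{\sqrt{\chi_{\lambda_0}(g_1)\prod_i h^{\lambda_0}_{ii}}}}{\psi_{\lambda_0}^{+}(1)} \Big( \psi_{\lambda_0}^{+}(a)\psi_{\lambda_0}^{+}(b) - \psi_{\lambda_0}^{-}(a)\psi_{\lambda_0}^{-}(b) \Big). \]
Here the hypothesis is decisive: since neither $A$ nor $B$ is contained in $O_l$, neither $a$ nor $b$ is an $l$-cycle, so the cycle types of $a$ and $b$ are both different from $h(\lambda_0)$; hence $\psi_{\lambda_0}^{\pm}(a) = \chi_{\lambda_0}(a)/2$ and $\psi_{\lambda_0}^{\pm}(b) = \chi_{\lambda_0}(b)/2$, with no splitting. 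The bracket therefore vanishes and $\Sigma_1 = \Sigma_2$. The main point to get right, and the only genuine bookkeeping, is the identification of $\lambda_0$ as the unique self-adjoint partition with the prescribed principal hooks and the verification that it is the sole source of any discrepancy between the character values at $g_1$ and $g_2$.
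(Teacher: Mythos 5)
Your proof is correct and takes essentially the same approach as the paper: both arguments reduce the claim to showing that the two character sums attached to the two $\Alt(n)$-classes of $l$-cycles are equal, identify the unique self-adjoint partition $\lambda_0$ containing an $l$-hook as the only possible source of a discrepancy, and kill that contribution using the hypothesis that $a$ and $b$ are not $l$-cycles, so that $\psi_{\lambda_0}^{\pm}(a)=\chi_{\lambda_0}(a)/2$ and $\psi_{\lambda_0}^{\pm}(b)=\chi_{\lambda_0}(b)/2$. The only difference is cosmetic ordering of the algebra: the paper factors out $\psi_{\lambda_0}^{+}(a)\psi_{\lambda_0}^{+}(b)/\psi_{\lambda_0}^{+}(1)$ first and observes that the remaining bracket is $\overline{\chi_{\lambda_0}(g_1)-\chi_{\lambda_0}(g_2)}=0$, whereas you factor out the square-root difference from the exceptional-value formulas first and let the $a,b$-bracket vanish.
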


\begin{proof}
Let $x \in A$, $y \in B$. Choose two nonconjugate $l$-cycles $d_1$, $d_2$ of $G$ such that $d_1 \in AB$. We need to show that $d_2$ belongs to $AB$. Let $$a_i := \sum_{\psi \in \Irr(G)} \frac{\psi(x) \psi(y) \overline{\psi(d_i)}}{\psi(1)}.$$As explained in Section \ref{chsum}, the fact that $d_1 \in AB$ is equivalent to saying that $a_1 \neq 0$. We need to show that $a_2 \neq 0$. We will show that $a_1=a_2$.

Section \ref{symalt} implies that in the formula that defines $a_i$ the summation can be done only over those $\psi$ which are labelled by partitions containing an $l$-hook. If $\lambda$ is not a self-adjoint partition containing an $l$-hook, then there is precisely one irreducible character $\psi$ of $G$ associated to $\lambda$ and the corresponding summand is the same in $a_1$ and in $a_2$. Assume now that $\lambda=\lambda_0$ is the unique self-adjoint partition of $n$ containing an $l$-hook, so that the restriction of $\chi_{\lambda_0}$ to $G$ is a sum of two irreducible characters $\psi_{\lambda_0}^+$ and $\psi_{\lambda_0}^-$. Since $x,y,1$ are not of cycle type $h(\lambda_0)$, we have $\psi_{\lambda_0}^+(x) = \psi_{\lambda_0}^-(x)$, $\psi_{\lambda_0}^+(y) = \psi_{\lambda_0}^-(y)$ and $\psi_{\lambda_0}^+(1) = \psi_{\lambda_0}^-(1)$. It follows that $$a_1-a_2 = \frac{\psi_{\lambda_0}^+(x) \psi_{\lambda_0}^+(y)}{\psi_{\lambda_0}^+(1)} \cdot \left( \overline{\psi_{\lambda_0}^+(d_1)}+\overline{\psi_{\lambda_0}^-(d_1)}-\overline{\psi_{\lambda_0}^+(d_2)}-\overline{\psi_{\lambda_0}^-(d_2)} \right).$$The second factor equals $\overline{\chi_{\lambda_0}(d_1)-\chi_{\lambda_0}(d_2)}$, which is equal to $0$ since $d_1$ and $d_2$ are conjugate in $\Sym(n)$.
\end{proof} 

\begin{lemma} \label{abon}
Let $\varepsilon > 0$. There exists $N \in \mathbb{N}$ such that the following holds for every $n \geq N$: whenever $A$ and $B$ are conjugacy classes of $G = \Alt(n)$ not contained in $O_l$ and $|A||B| \geq |G|^{1+\varepsilon}$, then $AB \supseteq O_l$. 
\end{lemma}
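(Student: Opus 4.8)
The plan is to combine Lemma \ref{deltac} with the Dvir--Rodgers criterion (Theorem \ref{dr}) to first show that $AB$ contains at least one $l$-cycle, and then to upgrade this to the full set $O_l$ by invoking Lemma \ref{onsa}. The key observation is that Theorem \ref{dr} provides exactly the tool to guarantee an $l$-cycle in a product of two classes, provided the $\delta$-values are large enough, and that this largeness is precisely what Lemma \ref{deltac} extracts from a lower bound on class sizes.

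First I would fix $\gamma = 1/2$ and apply Lemma \ref{deltac} with a suitably small auxiliary $\varepsilon'$. The hypothesis $|A||B| \geq |G|^{1+\varepsilon}$ does not directly say that each of $|A|$ and $|B|$ is large, so I would split into cases based on the relative sizes. If both classes satisfy $|A|, |B| \geq |G|^{1/2 + \varepsilon/4}$, say, then Lemma \ref{deltac} yields $\delta(A) + \delta(B) > n$ for $n$ large (choosing $\varepsilon'$ so that each contributes more than $n/2$), which by Theorem \ref{dr} forces $AB$ to contain an $l$-cycle. The remaining difficulty is the unbalanced case, where one class, say $A$, is very small (close to $|G|^0$) while $B$ is very large (close to $|G|^1$). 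Here the product $|A||B| \geq |G|^{1+\varepsilon}$ still forces $B$ to be large, and I would need a lower bound on $\delta(A) + \delta(B)$ even when $\delta(A)$ is potentially small. The point is that a large class $B$ has $\delta(B)$ close to $n$: applying Lemma \ref{deltac} to $B$ with $\gamma$ close to $1$ gives $\delta(B) > (1-\varepsilon')n$, and since $A$ is nontrivial (indeed any nontrivial class of $\Alt(n)$ has $\delta \geq 2$, as the minimal nonidentity $\delta$ comes from a $3$-cycle giving $\delta = 2$), one must verify $\delta(A) + \delta(B) > n$ in the $n$ odd case, or $> n$ in the even case. This last inequality is where the work concentrates.

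The main obstacle is therefore the delicate bookkeeping in the unbalanced regime: when $A$ is tiny, $\delta(A)$ could be as small as $2$, so I cannot rely on $A$ alone, and I must ensure that $\delta(B) > n - 2$ (for $n$ odd) or the corresponding bound for $n$ even. This is too strong to follow from $|B| \approx |G|$ alone via Lemma \ref{deltac} in its stated form. The cleaner route is to balance the exponents: since $|A||B| \geq |G|^{1+\varepsilon}$, I would choose the split point so that $\delta(A)$ and $\delta(B)$ together exceed $n$ by using the \emph{product} bound directly. Concretely, I would set $\gamma_A$ and $\gamma_B$ with $\gamma_A + \gamma_B = 1 + \varepsilon$ and $|A| \geq |G|^{\gamma_A}$, $|B| \geq |G|^{\gamma_B}$ (taking $\gamma_A$ to be whatever makes $|A| = |G|^{\gamma_A}$ and $\gamma_B = 1 + \varepsilon - \gamma_A$), then apply Lemma \ref{deltac} to each to obtain $\delta(A) > (\gamma_A - \varepsilon')n$ and $\delta(B) > (\gamma_B - \varepsilon')n$, whence
\begin{equation*}
\delta(A) + \delta(B) > (\gamma_A + \gamma_B - 2\varepsilon')n = (1 + \varepsilon - 2\varepsilon')n > n
\end{equation*}
for $\varepsilon' < \varepsilon/2$ and $n$ large. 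This uniform argument sidesteps the case analysis entirely and handles both the balanced and unbalanced regimes at once, as long as $\gamma_A, \gamma_B$ stay in $(0,1)$ so that Lemma \ref{deltac} applies; the edge case where one $\gamma$ would exceed $1$ (forcing the other below $0$) corresponds to one class being essentially all of $G$, which can be treated separately or absorbed by a mild truncation.

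Finally, once $AB$ is known to contain an $l$-cycle, I would invoke Lemma \ref{onsa}: since by hypothesis neither $A$ nor $B$ is contained in $O_l$, and $AB$ contains an $l$-cycle, Lemma \ref{onsa} immediately gives $AB \supseteq O_l$, completing the proof. The only care needed is checking the parity constraint $\delta(A) + \delta(B) > n-1$ for $n$ odd versus $> n$ for $n$ even, but the bound $(1 + \varepsilon - 2\varepsilon')n > n$ clears both thresholds for $n$ large.
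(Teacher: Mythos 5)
Your exponent bookkeeping is fine and matches the paper: writing $|A|=|G|^{\gamma_A}$, $|B|=|G|^{\gamma_B}$ with $\gamma_A+\gamma_B\geq 1+\varepsilon$, applying Lemma \ref{deltac} with $\varepsilon'<\varepsilon/2$ to get $\delta(A)+\delta(B)>n$, and finishing with Lemma \ref{onsa} is exactly the paper's skeleton. (Your worry about $\gamma$'s leaving $(0,1)$ is moot: conjugacy classes have size strictly less than $|G|$, so $|A||B|\geq|G|^{1+\varepsilon}$ forces both exponents into $(\varepsilon,1)$.) The genuine gap is the central step: you apply Theorem \ref{dr} directly to $A$ and $B$, but that theorem is stated for conjugacy classes of $\Sym(n)$ contained in $\Alt(n)$, whereas $A$ and $B$ are classes of $\Alt(n)$. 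These are not the same thing: an exceptional class of $\Alt(n)$ (all cycle lengths odd and pairwise distinct) is only \emph{half} of a $\Sym(n)$-class, and such classes cannot be dismissed here --- for even $n$ the two $\Alt(n)$-classes of cycle type $(n-3,3)$ each have size $|G|/(3(n-3))=|G|^{1-o(1)}$, are not contained in $O_l$, and satisfy your hypotheses. For exceptional $A,B$, Dvir--Rodgers only yields $O_l\subseteq A'B'$ for the $\Sym(n)$-closures $A',B'$; since $A'B'=AB\cup AB''\cup A''B\cup A''B''$ (where $A''=\tau A\tau^{-1}$, $B''=\tau B\tau^{-1}$ for an odd $\tau$), all the $l$-cycles could a priori lie in the mixed products $AB''\cup A''B$, and nothing in your argument rules this out. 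So you have not produced an $l$-cycle in $AB$ itself, which is precisely the input Lemma \ref{onsa} requires.

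The paper's proof is organized around exactly this difficulty. If at least one class, say $A$, is non-exceptional, it applies Theorem \ref{dr} to $A$ and the closure $B'$, then uses the $\Sym(n)$-invariance of $A$ to write $AB'=AB\cup\tau(AB)\tau^{-1}$; since $O_l$ is $\tau$-invariant, $O_l\subseteq AB'$ forces an $l$-cycle into $AB$, and Lemma \ref{onsa} finishes. When both $A$ and $B$ are exceptional this trick fails, and the paper instead invokes Lemma \ref{excon}(1), whose proof is a character-sum estimate resting on the Larsen--Shalev bound (Theorem \ref{ls}) and the degree bounds of Lemma \ref{degalt} --- machinery your proposal never touches. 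In short, your outline has the right shape (Dvir--Rodgers plus Lemma \ref{onsa}), but the step ``Theorem \ref{dr} forces $AB$ to contain an $l$-cycle'' is unjustified whenever $A$ or $B$ is exceptional, and repairing it is the actual content of the paper's proof.
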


\begin{proof}
If both $A$ and $B$ are exceptional, then the result follows from Lemma \ref{excon}(1). Assume without loss of generality that $A$ is non-exceptional. Let $B'$ be the conjugacy class of $\Sym(n)$ containing $B$. We claim that $AB' \supseteq O_l$. Write $|A|=|G|^{\gamma_1}$, $|B|=|G|^{\gamma_2}$, so that $0 < \gamma_i < 1$ for $i=1,2$ and $\gamma_1+\gamma_2 \geq 1+\varepsilon$. Lemma \ref{deltac} implies that if $n$ is large enough, then $\delta(A) > n (\gamma_1-\varepsilon/2)$ and $\delta(B) > n (\gamma_2-\varepsilon/2)$, therefore $$\delta(A)+\delta(B') = \delta(A)+\delta(B) > n(\gamma_1+\gamma_2-\varepsilon/2-\varepsilon/2) \geq n(1+\varepsilon-\varepsilon) = n,$$which implies that $AB'$ contains $O_l$ by Theorem \ref{dr}. We may assume that $B'$ properly contains $B$, that is, $B$ is an exceptional class. Since $AB'$ contains $O_l$, the normal set $AB$ contains an $l$-cycle, hence $AB$ contains $O_l$ by Lemma \ref{onsa}.
\end{proof}

\begin{lemma} \label{thcl}
Theorem \ref{main} holds in the case when $A$, $B$, $C$, $D$ are conjugacy classes.
\end{lemma}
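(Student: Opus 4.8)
The plan is to reduce the statement $ABCD = G$ for four conjugacy classes to the machinery already assembled, principally Lemma~\ref{abon} (which produces $O_l$ from a product of two large enough classes) and Lemma~\ref{excon}(3) (which gives $O_l A = G$ for $A$ a class of $l$-cycles). The fixed element $g \in G$ whose class we must hit is arbitrary; by the character-sum criterion of Section~\ref{chsum} it suffices to realize $g$ as a product of elements from the four classes. The natural strategy is to pair the classes so that one pair produces a set containing $O_l$ and then absorb the remaining two classes together with an $l$-cycle into all of $G$.

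First I would dispose of the case in which one of the four classes, say $A$, is itself contained in $O_l$. In that situation, since $O_l$ contains a full $G$-class of $l$-cycles, Lemma~\ref{excon}(3) gives $AX = G$ for any class $X$ of $l$-cycles, but more to the point I would group the other three classes: by the hypothesis every pairwise product of sizes is at least $|G|^{1+\varepsilon}$, so $BC \supseteq O_l$ by Lemma~\ref{abon} (after checking $B,C \not\subseteq O_l$, or handling that sub-case separately using Lemma~\ref{excon}), and then $ABCD \supseteq O_l \cdot D$, which equals $G$ by Lemma~\ref{excon}(3) provided $A$ supplies the needed $l$-cycle. The bookkeeping here is to make sure that whichever classes happen to sit inside $O_l$ are precisely the ones fed into the $O_l \cdot (\text{class}) = G$ step, and that the pair used to generate $O_l$ avoids the excluded hypothesis of Lemma~\ref{abon}.

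The main case is when none of $A,B,C,D$ is contained in $O_l$. Here I would apply Lemma~\ref{abon} to the pair $A,B$, whose product of sizes is $\geq |G|^{1+\varepsilon}$ by hypothesis, to conclude $AB \supseteq O_l$ once $n$ is large. Symmetrically I would apply Lemma~\ref{abon} to $C,D$ to get $CD \supseteq O_l$. Then
\[
ABCD \supseteq O_l \cdot O_l = \Alt(n) = G
\]
by Lemma~\ref{on2an}. Since $AB = BA$ and all four sets are normal, the order of the factors is immaterial, so this computation is legitimate. The flexibility of having six pairwise size conditions rather than four individual ones is exactly what lets both the $A,B$ pairing and the $C,D$ pairing satisfy the hypothesis of Lemma~\ref{abon} simultaneously.

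The step I expect to require the most care is the interface with Lemma~\ref{abon}: that lemma is stated only for pairs of classes \emph{not contained in $O_l$}, so the genuinely delicate part is the disjunction into the two cases above and the verification that, after choosing how to pair the classes, the chosen pair legitimately avoids $O_l$. When one or more classes lie in $O_l$ I must route them into the $O_l\cdot(\text{class})=G$ reduction via Lemma~\ref{excon}(3) rather than into Lemma~\ref{abon}, and confirm that the remaining classes still admit a pairing with product of sizes $\geq |G|^{1+\varepsilon}$ to manufacture the needed copy of $O_l$. Once that case split is organized correctly, each branch closes by $O_l O_l = G$ or $O_l \cdot (\text{class of } l\text{-cycles}) = G$, and a single $N = N(\varepsilon)$ absorbing the finitely many ``$n$ sufficiently large'' thresholds from Lemmas~\ref{deltac}, \ref{excon}, and \ref{abon} finishes the argument.
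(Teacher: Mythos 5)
Your overall route is the same as the paper's: use Lemma~\ref{abon} to manufacture $O_l$ from a pair of classes not contained in $O_l$, use Lemma~\ref{excon}(3) to absorb classes of $l$-cycles, and finish with $O_l^2 = G$ (Lemma~\ref{on2an}). Your main case (none of $A,B,C,D$ contained in $O_l$) is exactly the paper's final branch and is correct as written: $AB \supseteq O_l$ and $CD \supseteq O_l$ by Lemma~\ref{abon}, hence $ABCD \supseteq O_l^2 = G$.

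There is, however, one branch where the plan as stated does not close: the case in which \emph{three or all four} of the classes consist of $l$-cycles. Your prescription for the degenerate cases is to route the classes inside $O_l$ into the $O_l \cdot (\text{class}) = G$ step and to ``confirm that the remaining classes still admit a pairing with product of sizes $\geq |G|^{1+\varepsilon}$'' so that Lemma~\ref{abon} can produce $O_l$. But if at least three classes lie in $O_l$, then at most one class remains outside it, so no such pairing exists, and Lemma~\ref{abon} cannot be applied to \emph{any} pair, since every pair then contains a class inside $O_l$. The missing ingredient is that two classes of $l$-cycles already generate $O_l$ by themselves: classes of $l$-cycles are exceptional, so Lemma~\ref{excon}(1) gives that their product contains $O_l$, and indeed Lemma~\ref{excon}(4) states directly that a product of three classes of $l$-cycles equals $G$. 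This is precisely how the paper's proof opens: if at least three of the classes consist of $l$-cycles, invoke Lemma~\ref{excon}(4); otherwise relabel so that $A,B \not\subseteq O_l$, get $AB \supseteq O_l$ from Lemma~\ref{abon}, and finish via Lemma~\ref{excon}(3) or via Lemma~\ref{abon} plus Lemma~\ref{on2an} according to whether one of $C,D$ is a class of $l$-cycles. Your parenthetical ``handling that sub-case separately using Lemma~\ref{excon}'' points at the right lemma, but you never identify part (1) or (4), which is what actually rescues this branch. Two smaller inaccuracies: Lemma~\ref{excon}(3) does \emph{not} give $AX = G$ for single classes $A \subseteq O_l$ and $X$ of $l$-cycles (it gives $O_l X = G$, and $O_l$ is a union of two classes); and in your first case the correct chain is $ABCD \supseteq A \cdot O_l \cdot D = (O_l A) D = GD = G$, not ``$ABCD \supseteq O_l \cdot D$.'' With the $\geq 3$ case repaired by Lemma~\ref{excon}(4) and these citations fixed, your argument coincides with the paper's.
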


\begin{proof}
Let $l$ be $n$ if $n$ is odd and $n-1$ if $n$ is even. If at least three of the classes $A$, $B$, $C$, $D$ consist of $l$-cycles, then the result follows from Lemma \ref{excon}(4). Therefore we may assume that $A$ and $B$ are not classes of $l$-cycles, so that $O_l \subseteq AB$ by Lemma \ref{abon}. If any of $C,D$ is a class of $l$-cycles, then the result follows from Lemma \ref{excon}(3) and if $C$ and $D$ are not classes of $l$-cycles, then the result follows from Lemma \ref{abon} and Lemma \ref{on2an}.
\end{proof}

\begin{proof}[Proof of Theorem \ref{main}]
Let $|A|=|G|^a$, $|B|=|G|^b$, $|C|=|G|^c$, $|D|=|G|^d$. For any $\gamma > 0$ there exists $N=N(\gamma) \in \mathbb{N}$ such that whenever $n > N$, any normal subset $S$ of $G$ of size at least $|G|^{\gamma}$ contains a conjugacy class of $G$ of size at least $|G|^{\gamma-\varepsilon/3}$ by \cite[Lemma 4.2]{MP}. Applying this for $n > \max \{N(a),N(b),N(c),N(d)\}$ and for $(S,\gamma) = (A,a)$, $(B,b)$, $(C,c)$, $(D,d)$ we obtain that there exist conjugacy classes $A_0$, $B_0$, $C_0$, $D_0$ of $G$ contained in $A$, $B$, $C$, $D$ respectively, such that $|A_0| \geq |G|^{a-\varepsilon/3}$, $|B_0| \geq |G|^{b-\varepsilon/3}$, $|C_0| \geq |G|^{c-\varepsilon/3}$ and $|D_0| \geq |G|^{d-\varepsilon/3}$. The hypotheses of Theorem \ref{main} hold for $A_0$, $B_0$, $C_0$, $D_0$ with the constant $\varepsilon/3$. Hence $ABCD \supseteq A_0B_0C_0D_0=G$ by Lemma \ref{thcl}.
\end{proof}

\end{document}